\newcommand{\beq}{\begin{equation}}
\newcommand{\eeq}{\end{equation}}
\newcommand{\beqa}{\begin{eqnarray}}
\newcommand{\eeqa}{\end{eqnarray}}
\newcommand{\nn}{\nonumber}
\newcommand{\noi}{\noindent}
\newtheorem{definition}{Definition}
\newtheorem{proposition}{Proposition}
\newtheorem{theorem}{Theorem}
\newtheorem{corollary}{Corollary}
\newtheorem{lemma}{Lemma}
\theoremstyle{definition}
\begin{document}
\author{Piergiulio Tempesta}
\address{Departamento de F\'{\i}sica Te\'{o}rica II (M\'{e}todos Matem\'{a}ticos de la f\'isica), Facultad de F\'{\i}sicas, Universidad
Complutense de Madrid, 28040 -- Madrid, Spain \\ and
Instituto de Ciencias Matem\'aticas, C/ Nicol\'as Cabrera, No 13--15, 28049 Madrid, Spain}
\email{p.tempesta@fis.ucm.es, piergiulio.tempesta@icmat.es}
\title[formal groups, AM congruences and zeta functions]{The Lazard formal group, universal congruences and special values of zeta functions}
\date{July 7, 2015}
\maketitle

\begin{abstract} A connection between the theory of formal groups and arithmetic number theory is established. In particular, it is shown how to construct general Almkvist--Meurman--type congruences for the universal Bernoulli polynomials that are related with the Lazard universal formal group \cite{Tempesta1}-\cite{Tempesta3}. Their role in the theory of $L$--genera for multiplicative sequences is illustrated. As an application, sequences of integer numbers are constructed. New congruences are also obtained, useful to compute special values of a new class of Riemann--Hurwitz--type zeta functions.
\end{abstract}

\tableofcontents

%\keywords{Lazard's formal group | zeta functions | Dedekind characters}

\section{Introduction: Formal group laws}

The theory of formal groups \cite{Boch}, \cite{Haze} has been intensively investigated in the last decades, due to its relevance in many branches of mathematics, especially algebraic topology \cite{BMN},  \cite{BCRS}, \cite{Quillen}, \cite{Faltings}, the theory of elliptic curves \cite{Serre2}, and arithmetic number theory \cite{Adel}, \cite{Adel2}, \cite{Tempesta1}, \cite{Tempesta3}.

%he notion of formal group has been intensively investigated in the last decades, due to its relevance in many fields of mathematics.

Given a commutative ring $R$ with identity, and the ring $R\left\{ x_{1},\text{ }%
x_{2},..\right\} $ of formal power series in the variables $x_{1}$, $x_{2}$,
... with coefficients in $R$, a commutative one--dimensional formal group law
over $R$ is a formal power series $\Phi \left( x,y\right) \in R\left\{
x,y\right\} $ such that \cite{Boch}
\begin{equation*}
1)\qquad \Phi \left( x,0\right) =\Phi \left( 0,x\right) =x,
\end{equation*}%
\begin{equation*}
2)\qquad \Phi \left( \Phi \left( x,y\right) ,z\right) =\Phi \left( x,\Phi
\left( y,z\right) \right) \text{.}
\end{equation*}
When $\Phi \left( x,y\right) =\Phi \left( y,x\right) $, the formal group law is
said to be commutative (the existence of an inverse formal series $\varphi
\left( x\right) $ $\in R\left\{ x\right\} $ such that $\Phi \left( x,\varphi
\left( x\right) \right) =0$ follows from the previous definition).
%In this paper, we wish to clarify the connection between formal group laws and number theory from an algebraic point of view.

The relevance of formal group laws relies first of all on their close connection with group theory. Precisely, a formal group law $\Phi(x,y)$ defines a functor $F: \bf{Alg}$$_{R} \longrightarrow \bf{Group}$, where $\textbf{Alg}$$_R$ denotes the category of commutative unitary algebras over $R$ and $\textbf{Group}$ denotes the category of groups \cite{Haze}. The functor $F$ is by definition the formal group (sometimes called the formal group scheme) associated to the formal group law $\Phi(x,y)$.

As is well known, over a field of characteristic zero, there exists an equivalence of categories between Lie algebras and formal groups.

%Let $V$ be the valuation ring of a complete ultrametric field $\mathcal{K}$. Then a commutative diagram of functors holds \cite{Serre}:
%
%\begin{center}
%Analytic Groups $/\mathcal{K}  \longleftrightarrow $ Formal Groups /$\mathcal{K}$
%\end{center}
%
%\begin{center}
%$\nwarrow \nearrow$
%\end{center}
%
%\begin{center}
%Formal Groups /$V$
%\end{center}

Given a formal group law $\Phi(x,y)$, let $\Phi_2(x,y)$ denote its quadratic part. The Lie algebra (over the same ring) associated to $\Phi(x,y)$ is defined via the identification $[x,y]=\Phi_2(x,y)-\Phi_2(y,x)$. This equivalence of categories is no longer true in a field of characteristic $p\neq0$.

A crucial point is the relation between formal groups and cobordism theory \cite{BMN}, \cite{Nov}, \cite{Quillen}, \cite{Ray}.
Given a topological space $S$, consider the complex cobordism theory of $S$, i.e. the generalized cohomology theory $MU^{*}(S)$ defined by the Thom spectrum $MU$.

Then there exists  a universal commutative formal group law $F_{MU}$ associated to the cobordism theory. It turns out that its logarithm is $log_{F_MU} (x) =\sum_{k\geq 0} [\mathbb{C}P^{k}]x^{k+1}/k +1$, where $[\mathbb{C}P^{k}]$ is the cobordism class of the complex projective space $\mathbb{C}P^{k}$ of complex dimension $k$.

%In \cite{BMN} and \cite{Nov}, the relevance of the \textit{Lazard formal group} in the discussion of the classical and modern theory of unitary cobordisms has been clarified.
A nice connection with combinatorial Hopf algebras and Rota's umbral calculus \cite{Rota} was found in \cite{BCRS}. A combinatorial approach has also been proposed in \cite{AB}.

In the papers \cite{Tempesta1}, \cite{Tempesta3} a relation between the Lazard formal group and the theory of L--series has been established. A new class of Bernoulli--type polynomials, called the \textit{Universal Bernoulli polynomials}, have been introduced, which were preliminarily studied in \cite{Tempesta2}. They are Appell polynomials defined via the universal group exponential law.

In \cite{MT1}, \cite{MT2}, the universal Bernoulli polynomials have been related to the theory of hyperfunctions of one variable by means of an extension of the classical Lipschitz summation formula to negative powers.

Due to the generality of their definition, the universal polynomials include many (if not all) of the generalizations of the classical Bernoulli polynomials known in the literature. In \cite{Adel}, \cite{Adel2}, Kummer--type congruences for the universal Bernoulli numbers have been derived.

Another interesting aspect is represented by the relation among Bernoulli numbers, Hirzebruch's theory of genera, and the computation of Todd classes \cite{Hirz}. Indeed, in the context of complex cobordism theory, let  $\varphi$  be a multiplicative genus in the sense of Hirzebruch, and  $c_n$  be the value of  $\varphi$  on the complex projective $n$--space. Then the generating function $t/G(t)$, where $G(t)$ is the universal formal group exponential (see Definition \ref{def1}),  is the corresponding characteristic power series in Hirzebruch's formalism. For instance, the power series associated to the $L$--genus, the Todd genus and the $A$--genus are special cases of the proposed construction. Consequently, the generating function defining the polynomials $B_{k,a}^G(x)$ can be interpreted as some special value of the genus.

In this paper, we address the following general problem: \textit{Construct congruences for the Universal Bernoulli polynomials and apply them to compute special values of the related zeta functions.}

\textbf{Main result}. A universal congruence, that generalizes the well--known Almkwist--Meurman \cite{AM} and Bartz--Rutkowski \cite{BR} congruences to the case of the universal Bernoulli polynomials, is proposed.
Two families of polynomials, related to the characteristic power series of important $m$--sequences of the genera theory of algebraic topology, are introduced. They represent nontrivial representations of the universal polynomials. Their arithmetic properties are studied.

A second result, coming from the theory of universal congruences, is a procedure enabling one to construct infinitely many sequences of integers and polynomials with integral coefficients.

Finally, we deduce congruences for the values at negative integers of a new class of Hurwitz--type zeta functions, that have been recently introduced in \cite{Tempesta3}.
Work is in progress on a $p$--adic generalization of the previous theory.

\section{The Lazard universal formal group, the universal Bernoulli polynomials and numbers}

We recall some basic definitions, necessary in the subsequent discussion.

\begin{definition}\label{def1}
Consider the  ring $B=\mathbb{Z}[c_{1},c_{2},...]$ of integral polynomials in infinitely many variables, and the formal group logarithm
\begin{equation}
F\left( s\right) = \sum_{i=0}^{\infty} c_i \frac{s^{i+1}}{i+1},
\label{I.1}
\end{equation}%

\noindent with $c_0=1$. Let $G\left( t\right)$ be its compositional inverse (the formal group exponential):
\begin{equation}
G\left( t\right) =\sum_{i=0}^{\infty} \gamma_i \frac{t^{i+1}}{i+1} \label{I.2}
\end{equation}%
so that $F\left( G\left( t\right) \right) =t$. We have $\gamma_{0}=1, \gamma_{1}=-c_1, \gamma_2= \frac{3}{2} c_1^2 -c_2,\ldots$.
The Lazard formal group law \cite{Haze} is defined by the formal power series
\begin{equation}
\Phi \left( s_{1},s_{2}\right) =G\left( F\left(
s_{1}\right) +F\left( s_{2}\right) \right).
\end{equation}

\end{definition}
The coefficients of the power series $G\left( F\left( s_{1}\right) +F\left(
s_{2}\right) \right)$ lie in the ring $B \otimes \mathbb{Q}$ and generate over $\mathbb{Z}$ a subring $A \subset B \otimes \mathbb{Q}$, called the Lazard ring $L$.

For any commutative one-dimensional formal group law over any ring $R$, there exists a unique homomorphism $L\to R$ under which the Lazard group law is mapped into the given group law (the so called \textit{universal property} of the Lazard group).

In \cite{Tempesta1}, the following family of polynomials has been introduced.

\begin{definition}
The universal higher--order Bernoulli polynomials $\widehat{B}_{k}^{(\alpha)}\left(
x,c_{1},...,c_{n}\right) \equiv \widehat{B}_{k}^{(\alpha)}\left( x\right) $ are defined by the relation

\begin{equation}
\left( \frac{t}{G\left( t\right) }\right) ^{\alpha}e^{xt}=\sum_{k\geq
0}\widehat{B}_{k}^{(\alpha)}\left( x\right) \frac{t^{k}}{k!}\text{,}\qquad \qquad x,\alpha\in
\mathbb{R}.  \label{I.3}
\end{equation}
\end{definition}

\noi (wherever the l.h.s. makes sense). Notice that if $\alpha=1$, $c_{i}=\left( -1\right) ^{i}$, then $F\left( s\right) =\log
\left( 1+s\right) \,$, $G\left( t\right) =e^{t}-1$, and the universal
Bernoulli polynomials and numbers reduce to the standard ones. For sake of
simplicity, we will put $\widehat{B}_{k}^{(1)}\left( x\right) \equiv \widehat{B}_{k}\left(
x\right) $ and $\widehat{B}_{k}^{(1)}\left( 0\right) \equiv \widehat{B}_{k}$.

The quantities $\widehat{B}_{k} \in \mathbb{Q}\left[ c_{1},c_{2},\ldots \right]$ coincide with Clarke's \textit{universal Bernoulli numbers} \cite%
{Clarke}.

The universal Bernoulli numbers satisfy some very interesting congruences. In particular, we will discuss briefly the universal Von Staudt's congruence \cite{Clarke} (that generalizes the classical Clausen--von Staudt congruence and several variations of it known in the literature) and the universal Kummer congruence \cite{Adel}. Both congruences have a distinguished role in algebraic geometry \cite{BCRS}, \cite{Katz}.

The universal Bernoulli polynomials (\ref{I.3}) possess many remarkable
properties. By construction, for any choice of the sequence $\left\{
c_{n}\right\} _{n\in \mathbb{N}}$ they represent a class of \textit{Appell
polynomials}. It means that, for all $k\in \mathbb{N}/\{0\}$, $D \widehat{B}_{k}^{(\alpha)}\left( x\right)= k \widehat{B}_{k-1}^{(\alpha)}\left( x\right)$, where $D$ denotes the derivative operator. The binomial property holds:
\begin{equation}
\widehat{B}_{n}^{(\alpha)}(x+y)=\sum_{m=0}^{n}\binom{n}{m}\widehat{B}^{(\alpha)}_m(x)y^{n-m}\label{2.1}.
\end{equation}

We also mention that in the paper \cite{Tempesta2}, several families of polynomial obtained by specializing eq. \eqref{I.3}, i.e. the
Bernoulli--type polynomials of first and second kind, as well as related
Euler polynomial sequences, were studied.

\section{Generalized Almkvist--Meurman congruences}
\subsection{Some universal congruences}

We mention here only two of the
most relevant properties of the universal Bernoulli numbers $\widehat{B}_{n}$ \cite{Adel}.

\textit{i) The universal Von Staudt's congruence} \cite{Clarke}.

\noindent Let $\widehat{B}_{0}=1$, and if $n>0$ is even, then
\begin{equation}
\widehat{B_{n}}\equiv -\sum_{\overset{p-1\mid n}{p\hspace{1mm}\text{prime}}}\frac{%
c_{p-1}^{n/(p-1)}}{p}\qquad mod\quad \mathbb{Z}\left[ c_{1},c_{2},...\right]
\text{;}  \label{US1}
\end{equation}%
\noindent Let $\widehat{B}_{1}=c_{1}/2$ and if $n>1$ is odd, then
\begin{equation}
\widehat{B_{n}}\equiv \frac{c_{1}^{n}+c_{1}^{n-3}c_{3}}{2}\qquad \hspace{5mm} mod\quad
\mathbb{Z}\left[ c_{1},c_{2},...\right] \text{.}  \label{US2}
\end{equation}

When $c_{n}=(-1)^n$, the celebrated Clausen--Von Staudt congruence for
Bernoulli numbers is obtained.

\smallskip \textit{ii) The universal Kummer congruences} \cite{Adel}, \cite{Adel2}.

The numerators of the classical Bernoulli numbers play a special role, due to the
Kummer congruences and to the notion of regular prime numbers, introduced in
connection with Fermat's  Last  Theorem. The
relevance of Kummer congruences in algebraic geometry has been enlightened
in \cite{BCRS}.
%More general versions of these congruences for the classical
%Bernoulli numbers are known in the literature \cite{You}.
As shown by Adelberg, the numbers $\widehat{B}_{n}$ satisfy a universal congruence. Suppose that $n\neq 0,1$ (mod $p-1$). Then
\begin{equation}
\frac{\widehat{B}_{n+p-1}}{n+p-1}\equiv \frac{\widehat{B}_{n}}{n}%
c_{p-1}\qquad mod\quad p\mathbb{Z}_{p}\left[ c_{1},c_{2},...\right] \text{.}
\end{equation}

We recall that in \cite{Carlitz1}, \cite{Carlitz2}, Carlitz studied Kummer congruences for specific values of the variables $c_i$.

Interesting congruences modulo prime numbers and for the cases $n\equiv 0,1$ mod $p-1$ have been proved by Adelberg in \cite{Adel3} and by Hong, Zhao and Zhao in \cite{HZZ}.

\smallskip Almkvist and Meurman in \cite{AM} discovered a remarkable congruence for the classical Bernoulli polynomials:
\begin{equation}
k^{n}\tilde{B}_{n}\left( \frac{h}{k}\right) \in \mathbb{Z}\text{,}  \label{II.29}
\end{equation}
where $k, h$ are positive integers, $\tilde{B}_n(t):=B_n(t)-B_n(0)$, and $B_n(x)$ are the Bernoulli polynomials. Bartz and Rutkowski \cite{BR} have proved a
theorem which combines the results of Almkvist and Meurman \cite{AM} and
Clausen--von Staudt. Simpler proofs of the original one proposed in \cite{AM} have also been provided in \cite{CS} and \cite{Sury}.

The main result of this section is a generalization of the Almkvist--Meurman (AM) and Bartz--Rutkowski theorems for the universal polynomials (\ref{I.3}). We will adopt the notation $B_{n}^{G}(x)$ to denote the family of generalized Bernoulli polynomials \eqref{I.3} associated with a given $G(t)$ (corresponding to a choice of the indeterminates $c_i$) with $\alpha=1$. Also, $B_{n}^{G}(0):=B_{n}^{G}$.

\begin{theorem}
\label{th1} Let $h> 0,$ $k>0,$ $n\geq 0$ be integers. Consider the polynomials
defined by
\begin{equation*}
\frac{t}{G\left( t\right) }e^{xt}=\sum_{i\geq 0}B_{i}^{G}\left( x\right)
\frac{t^{i}}{i!}\text{,}
\end{equation*}%
where $G\left( t\right)$ is a formal group exponential, such that $c_i \in \mathbb{Z}$ for all $i$. Assume that $c_{p-1}\equiv 0, 1$ mod $p$ for all odd primes $p$, and either $c_1\equiv c_3$ mod $2$, or $c_1$ is odd and $c_3$ even. Then
\begin{equation}
k^{n}\widetilde{B}_{n}^{G}\left( \frac{h}{k}\right) \in \mathbb{Z}\text{,}  \label{II.30}
\end{equation}%
where $\widetilde{B}_{n}^{G}\left( x\right) $ $=$ $B_{n}^{G}\left( x\right) -%
{B}_{n}^{G}$.
\end{theorem}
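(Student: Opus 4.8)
The plan is to reduce the statement \eqref{II.30} to the two universal congruences already recalled, namely the universal von Staudt congruence \eqref{US1}--\eqref{US2} and the universal Kummer congruence. First I would expand $k^{n}\widetilde{B}_{n}^{G}(h/k)$ using the binomial property \eqref{2.1} with $\alpha=1$ and $x=h/k$, writing
\begin{equation*}
k^{n}\widetilde{B}_{n}^{G}\!\left(\tfrac{h}{k}\right)=\sum_{m=0}^{n-1}\binom{n}{m}B_{m}^{G}\,h^{\,n-m}k^{\,m},
\end{equation*}
where the term $m=n$ has cancelled because of the subtraction $\widetilde{B}_{n}^{G}=B_{n}^{G}(x)-B_{n}^{G}$. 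Since $h,k\in\mathbb{Z}$, each summand is an integer multiple of $\binom{n}{m}B_{m}^{G}k^{m}$, so the whole expression is integral as soon as I can show $\binom{n}{m}B_{m}^{G}k^{m}\in\mathbb{Z}$ for every $0\le m\le n-1$. Thus the claim is local: it suffices to prove that for each prime $p$ the $p$-adic valuation of $\binom{n}{m}B_{m}^{G}k^{m}$ is nonnegative, and moreover I only need to worry about those $m$ for which $B_{m}^{G}$ itself has a denominator divisible by $p$.

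The next step is to control the denominators of the universal Bernoulli numbers $B_{m}^{G}$ prime by prime. The universal von Staudt congruence tells me exactly which primes can appear: for $m$ even, the only primes in the denominator of $B_{m}^{G}$ are those $p$ with $(p-1)\mid m$, and the $p$-part of the denominator is then $p^{1}$ with residue $-c_{p-1}^{m/(p-1)}/p$; for $m$ odd the denominator is at worst $2$, governed by \eqref{US2}. Here the hypotheses enter decisively. The assumption $c_{p-1}\equiv 0,1 \pmod p$ guarantees that the offending numerator $c_{p-1}^{m/(p-1)}$ is congruent to $0$ or $1$ modulo $p$, so that after multiplying by $k^{m}$ (where $p\mid k$ supplies the needed factor of $p$) or by $\binom{n}{m}$ the single power of $p$ in the denominator is absorbed. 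The parity condition $c_1\equiv c_3\pmod 2$, or $c_1$ odd and $c_3$ even, is the analogous statement at $p=2$: it ensures that the residue $(c_1^{m}+c_1^{m-3}c_3)/2$ appearing in \eqref{US2} behaves compatibly so that the factor $k^{m}$ or $\binom{n}{m}$ clears the denominator $2$.

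The heart of the argument, and the step I expect to be the main obstacle, is the delicate case where $p$ divides the denominator of $B_{m}^{G}$ but $p\nmid k$, so the factor $k^{m}$ contributes nothing $p$-adically and I must extract the needed cancellation from the binomial coefficient $\binom{n}{m}$ together with finer arithmetic of the $B_{m}^{G}$. This is precisely where the universal Kummer congruence $\tfrac{\widehat{B}_{n+p-1}}{n+p-1}\equiv \tfrac{\widehat{B}_{n}}{n}\,c_{p-1}\pmod{p\mathbb{Z}_p[c_1,c_2,\dots]}$ should be deployed, following the strategy used by Almkvist--Meurman and Bartz--Rutkowski in the classical case. I would argue by comparing $B_{m}^{G}$ across a chain of indices differing by $p-1$, using Kummer's congruence to transfer information along the chain, and combining it with the elementary Lucas-type estimate on the $p$-adic valuation of $\binom{n}{m}$ relative to the base-$p$ digits of $n$ and $m$. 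The technical difficulty is bookkeeping: one must verify that in every residue class of $m$ modulo $p-1$ the accumulated $p$-adic valuation from $\binom{n}{m}$, from $k^{m}$, and from the congruence hypotheses on $c_{p-1}$ is at least the valuation of the denominator predicted by von Staudt. Once this prime-by-prime check is carried through for all odd primes $p$ and separately at $p=2$ using the parity hypothesis, integrality of each term follows and hence \eqref{II.30} holds.
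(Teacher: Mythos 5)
Your opening expansion is correct: with \eqref{2.1} one indeed gets $k^{n}\widetilde{B}_{n}^{G}\left(\frac{h}{k}\right)=\sum_{m=0}^{n-1}\binom{n}{m}B_{m}^{G}\,h^{n-m}k^{m}$. The fatal step is the next one, where you reduce the theorem to the claim that each summand is separately integral, i.e.\ that $\binom{n}{m}B_{m}^{G}k^{m}\in\mathbb{Z}$ for every $0\le m\le n-1$. That claim is false, already in the classical case, which satisfies all hypotheses of the theorem (take $c_{i}=(-1)^{i}$, so $c_{p-1}=1$ for all odd primes $p$ and $c_{1}\equiv c_{3}$ mod $2$): for $n=3$, $m=2$, $k=1$ one has $\binom{3}{2}B_{2}k^{2}=3\cdot\frac{1}{6}=\frac{1}{2}\notin\mathbb{Z}$, while of course $k^{3}\widetilde{B}_{3}\left(\frac{h}{k}\right)\in\mathbb{Z}$. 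The integrality asserted in \eqref{II.30} is produced by cancellation \emph{across} different indices $m$, so no prime-by-prime bookkeeping on individual summands---however refined, and whether or not one invokes the universal Kummer congruence---can close the argument. Your third paragraph, which is exactly where this cancellation would have to live, names tools (Kummer congruence, Lucas-type valuation estimates) but contains no actual argument, and in any case it is aimed at proving the individual-term statement, which is untrue; note also that the paper's proof never uses the Kummer congruence at all.

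For comparison, the paper concentrates the cancellation into a single sum-level fact, Lemma \ref{lem4}: $\sum_{m=0}^{n-1}\binom{n}{m}k^{m}B_{m}^{G}\in\mathbb{Z}$ (no powers of $h$ appear). This is proved by splitting into even and odd $m$, inserting the universal von Staudt congruences \eqref{US1}--\eqref{US2}, using Fermat's little theorem ($k^{m}\equiv 1$ mod $p$ when $p\nmid k$ and $p-1\mid m$), and then invoking the Hermite--Bachmann congruence \eqref{HB}, $\sum_{m\ge 1,\ p-1\mid m}^{n-1}\binom{n}{m}\equiv 0$ mod $p$, which is precisely a congruence for a \emph{sum} of binomial coefficients---the cross-term cancellation your reduction throws away---plus a parity argument at $p=2$. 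The theorem then follows by a double induction, on $n$ and on $h$, via the shift identity $k^{n}B_{n}^{G}\left(\frac{h+1}{k}\right)=\sum_{m=0}^{n}\binom{n}{m}k^{m}B_{m}^{G}\left(\frac{h}{k}\right)$, so the powers $h^{n-m}$ never enter. If you insisted on your direct expansion instead, you would need a weighted analogue of \eqref{HB}, namely $\sum_{0<m<n,\ p-1\mid m}\binom{n}{m}h^{n-m}\equiv 0$ mod $p$ (together with a separate treatment of $p=2$); such a statement can be proved by a roots-of-unity/power-sum argument over $\mathbb{F}_{p}$, but it is nowhere stated or proved in your proposal, and it is a statement about sums, not about the single terms to which you reduced the problem.
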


\begin{proof}
%\subsection{Proof of Theorem \ref{th1}}
First, we shall establish a result crucial in the proof of Theorem \ref{th1}.

%\begin{lemma}
%\label{lem2} If s is a positive integer, then, under the same hypotheses of Theorem \ref{th1}, we have
%\begin{equation}
%\left( k^{n}-1\right) \sum_{\overset{p-1\mid n}{p \nmid k}}
%c_{p-1}^{n/(p-1)}/p\in \mathbb{Z}\text{.}
%\label{lemma1}
%\end{equation}
%\end{lemma}
%\begin{proof}
%The result is a consequence of Fermat's little Theorem: if $s\in \mathbb{Z}$, and $p \nmid s$, then $s^{p-1}\equiv 1$ mod $p$. Therefore, if $c_{p-1}\equiv 1$ mod $p$ for all primes %$p\geq 2 $, we deduce the congruence (\ref{lemma1}).
%\end{proof}

%\begin{lemma}
%\label{lem3} Let n be a positive integer and $p\geq2$ a prime number. Denote
%\begin{equation*}
%S_{p}\left( n\right) =\sum_{k=1}^{K}{\binom{n}{k(p-1)}} ,\qquad K=\left[
%\frac{n}{p-1}\right] \text{,}
%\end{equation*}
%
%where $\left[ \alpha \right] $ is the unique integer such that $[\alpha
%]\leq \alpha <[\alpha ]+1$. Then $S_{p}\left( n\right) \equiv 0$ $mod$ $p\,$
%if $p-1\nmid n\,$ and $S_{p}\left( n\right) \equiv 1$ $mod$ $p$ $\,$if $%
%p-1\mid n$.
%\end{lemma}

%This Lemma is a particular case of an old result of Jenkins (see \cite{Dickson}, vol. I, pag 271).

%\begin{lemma} \label{Lem1}
%\label{lem3} If $p$ is a prime, $p\geq2$, then
%\begin{equation*}
%\sum_{\overset{k=1}{p-1\mid k}}^{n-1} \binom{n}{k} \equiv 0 \hspace{3mm} mod \hspace{1mm} p
%\end{equation*}
%\end{lemma}
%\begin{proof}
%Let us introduce the notation $\sigma_{p}(n)=\sum_{\overset{k=1}{p-1\mid n}}^{n-1} \binom{n}{k}$. The case $p=2$ is trivial, since $\sigma_2(n)=0$ if $n=0,1$ and $\sigma_2(n)=2^{n}-2$ %if $n\geq 2$. Then, we assume that $p$ is an odd prime.
%\end{proof}

\begin{lemma}
\label{lem4} Under the assumptions of Theorem \ref{th1}, we have that
\begin{equation}
\sum_{m=0}^{n-1} \binom{n}{m} k^{m} {B}_{m}^{G} \in \mathbb{Z}. \label{fundcong}
\end{equation}
\end{lemma}
%\begin{proof}
\noi A relation useful in the proof of this statement is the following congruence, valid for any prime number $p\geq2$, first established by Hermite for $n$ odd in \cite{Herm}, and then proved in full generality in \cite{Bach}:
\begin{equation}
\sum_{\overset{m=1}{p-1\mid m}}^{n-1} \binom{n}{m} \equiv 0 \hspace{3mm} mod \hspace{1mm} p. \label{HB}
\end{equation}

\noindent Then, we have
\begin{eqnarray} \label{C3a}
\sum_{\overset{m=0}{m\hspace{1mm}\text{even}}}^{n-1} \binom{n}{m} k^{m} {B}_{m}^{G} \equiv &-&\sum_{\overset{m=2}{m\hspace{1mm}\text{even}}}^{n-1}\sum_{\overset{p\hspace{1mm}\text{odd prime}}{p-1 \mid m}}\binom{n}{m} k^{m}\frac{c_{p-1}^{m/(p-1)}}{p} \hspace{3mm} \\ \nn
&+& \sum_{\overset{m=2}{m\hspace{1mm}\text{even}}}^{n-1}\binom{n}{m}k^m \frac{c_{1}^{m}}{2} \hspace{34mm} mod \hspace{1mm} \mathbb{Z},
\end{eqnarray}
\begin{eqnarray} \label{C3b}
 \sum_{\overset{m=1}{m\hspace{1mm}\text{odd}}}^{n-1} \binom{n}{m} k^{m} {B}_{m}^{G} &\equiv& \frac{n c_1 k}{2}+ \sum_{\overset{m=3}{m\hspace{1mm}\text{odd}}}^{n-1} \frac{c_1^{m}+c_{1}^{m-3}c_3}{2}\binom{n}{m}k^{m}\qquad mod \hspace{1mm} \mathbb{Z}.
\end{eqnarray}
The congruences \eqref{C3a} and \eqref{C3b} hold due to the universal congruences \eqref{US1} and \eqref{US2}, and by taking into account that $B_{0}^{G}=1$.

Let us assume that $c_{p-1} \equiv 1$ mod $p$ for $p$ odd (the case $c_{p-1} \equiv 0$ mod $p$ will follow immediately).

Concerning the congruence \eqref{C3a}, in the r.h.s. we can suppose that in each addend of the first sum  $p\nmid k$; otherwise the addend would contribute an integer. Since $p\nmid k$ and $p-1 \mid m$, we can use Fermat's Little Theorem, and deduce that $k^{m} \equiv 1$ mod $p$. Then the first sum is an integer, due to congruence   \eqref{HB}.

By summing up the congruences  \eqref{C3a} and \eqref{C3b}, we get
\[
\sum_{m=0}^{n-1} \binom{n}{m} k^{m} {B}_{m}^{G} \equiv \sum_{m=1}^{n-1} \binom{n}{m} k^m \frac{c_1^m}{2}+ \sum_{\overset{m=3}{m\hspace{1mm}\text{odd}}}^{n-1} \frac{c_{1}^{m-3}c_3}{2}\binom{n}{m}k^{m} \qquad mod \hspace{1mm} \mathbb{Z}
\]
The first sum in the last congruence is an integer, due to relation \eqref{HB} for $p=2$ and the obvious fact that either the product $k^m c_1^m$ is even, or when is odd, its contribution in each summand does not change the parity of the sum. The remaining sum gives also an integer. Indeed,
\[
\sum_{m=3, \hspace{1mm}\text{m odd}}^{n-1} \binom{n}{m}=\begin{cases} 2^{n-1}-n \qquad\hspace{7mm}\text{if} \hspace{1mm} n \hspace{1mm}\text{is even}, \\ 2^{n-1}- n-1 \qquad\hspace{1mm}  \text{if}\hspace{1mm} n \hspace{1mm} \text{is odd}\end{cases}
\]
is even, and under the hypotheses of the theorem the factor $c_{1}^{m-3}c_3 k^{m}$ does not change its parity. Consequently, Lemma \ref{lem4} follows.

We can now complete the proof of Theorem \ref{th1}, by using first the induction principle on $n$, for $h$ fixed. We
observe that the cases $n=0$ and $n=1$ are trivial, due to the fact that $\tilde{B}_{0}(x)=0$ and $\tilde{B}_1(x)=x$. We assume the property true for indices $m<n$.
Then for the index $n$, we consider the base case $h=0$ and we proceed again by induction on $h \neq 0$.

By virtue of (\ref{2.1}) for $x=\frac{h}{k}$ and $y=\frac{1}{k}$ we get
\begin{equation*}
k^{n}B_{n}^{G}\left( \frac{h+1}{k}\right) =\sum_{m=0}^{n}\binom{n}{m} k^{m}B_{m}^{G}\left( \frac{h}{k}\right) \text{.}
\end{equation*}

For $h=0$, the theorem is obviously true. Assume that it holds for $h\neq 0$. We
obtain:
\[
k^{n}\widetilde{B}_{n}^{G}\left( \frac{h+1}{k}\right)- k^{n}\widetilde{B}_{n}^{G}\left( \frac{h}{k}\right)=\sum_{m=0}^{n-1}\binom{n}{m} k^{m}\tilde{B}_{m}^{G}\left( \frac{h}{k}\right)
+\sum_{m=0}^{n-1}\binom{n}{m} k^{m} {B}_{m}^{G}.  \label{II.32}
\]

The induction hypothesis, together with the base case $h/k=0$, shows that the theorem holds for all rational numbers $h/k$.

\end{proof}

\subsection{Related results}

The next theorem is a direct generalization of the Bartz--Rutkowski theorem, proved in \cite{BR}.

\begin{theorem}
\label{th2} Under the hypotheses of Theorem \ref{th1}, we have that if n is even or n=1,
\begin{equation}
k^{n}B_{n}^{G}\left( \frac{h}{k}\right) +\sum_{\overset{p-1\mid n}{p\nmid k}}\frac{%
c_{p-1}^{n/p-1}}{p}\in \mathbb{Z}\text{;}
\end{equation}
if $n\geq 3$ is odd,
\begin{equation}
k^{n} B_{n}^{G} \left( \frac{h}{k}\right) \in \mathbb{Z}\text{.}
\end{equation}
\end{theorem}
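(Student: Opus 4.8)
The plan is to reduce the whole statement to a question about the single number $B_n^G=B_n^G(0)=\widehat B_n$ by invoking Theorem \ref{th1}. Writing $B_n^G(h/k)=\widetilde B_n^G(h/k)+B_n^G$ and multiplying by $k^n$, Theorem \ref{th1} furnishes $k^n\widetilde B_n^G(h/k)\in\mathbb{Z}$, so that
\begin{equation*}
k^n B_n^G\!\left(\tfrac{h}{k}\right)\equiv k^n B_n^G \pmod{\mathbb{Z}}.
\end{equation*}
Thus the entire theorem becomes a statement about the fractional part of $k^n B_n^G$, which is controlled by the universal von Staudt congruences \eqref{US1}--\eqref{US2}. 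The two parity regimes of the statement ($n$ even or $n=1$ versus $n\ge 3$ odd) match exactly the two forms of that congruence, so I would split the proof along this line.

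For $n$ even (and, with $\widehat B_1=c_1/2$ playing the role of the right-hand side, for $n=1$) I would substitute \eqref{US1} to obtain
\begin{equation*}
k^n B_n^G\equiv -\sum_{(p-1)\mid n}\frac{k^n\, c_{p-1}^{\,n/(p-1)}}{p}\pmod{\mathbb{Z}},
\end{equation*}
and then split the sum according to whether $p\mid k$ or $p\nmid k$. When $p\mid k$ the factor $k^n$ contributes a power $p^n$ with $n\ge 1$, so the term lies in $\mathbb{Z}$ and drops out. When $p\nmid k$, the condition $(p-1)\mid n$ together with Fermat's little theorem gives $k^n\equiv 1\pmod p$, whence $k^n c_{p-1}^{\,n/(p-1)}\equiv c_{p-1}^{\,n/(p-1)}\pmod p$ and the term is congruent to $-c_{p-1}^{\,n/(p-1)}/p$ modulo $\mathbb{Z}$. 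The surviving contributions are then precisely $-\sum_{(p-1)\mid n,\,p\nmid k} c_{p-1}^{\,n/(p-1)}/p$, which cancels the correction sum in the statement; this settles the even and $n=1$ cases.

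For $n\ge 3$ odd I would instead apply \eqref{US2}, which yields
\begin{equation*}
k^n B_n^G\equiv \frac{k^n\bigl(c_1^{\,n}+c_1^{\,n-3}c_3\bigr)}{2}\pmod{\mathbb{Z}},
\end{equation*}
so the assertion $k^n B_n^G(h/k)\in\mathbb{Z}$ reduces to showing that $k^n\bigl(c_1^{\,n}+c_1^{\,n-3}c_3\bigr)$ is even. This parity estimate is the heart of the argument and the step I expect to be the main obstacle, since it is exactly where the hypotheses on $c_1$ and $c_3$ must enter: when $c_1\equiv c_3\pmod 2$ the monomials $c_1^{\,n}$ and $c_1^{\,n-3}c_3$ share the same parity, so their sum is even for every $k$, whereas in the remaining regime one must weigh the parity of $c_1^{\,n-3}c_3$ against that of $k^n$. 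I would therefore organize the odd case as a short parity analysis on $c_1,c_3$ (and, where needed, on $k$); once this is in place the conclusion for odd $n$ follows and the theorem is proved.
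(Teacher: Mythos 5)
Your reduction is exactly the paper's own: the paper proves Theorem \ref{th2} by combining Theorem \ref{th1} with the von Staudt congruences \eqref{US1}, \eqref{US2} and with the auxiliary relation \eqref{lemma1}, i.e. $(s^{n}-1)\sum_{p-1\mid n,\,p\nmid s}c_{p-1}^{n/(p-1)}/p\in\mathbb{Z}$, which is precisely your Fermat--little--theorem step in the even case written in a single formula. Your treatment of $n$ even and of $n=1$ (splitting the primes according to $p\mid k$ or $p\nmid k$, and using $(p-1)\mid n$ to get $k^{n}\equiv 1 \pmod p$) is complete and correct, and is a more explicit version of what the paper leaves implicit.

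The gap is the odd case, and it is worse than a missing step: the ``short parity analysis'' you defer cannot be carried out under the second branch of the hypotheses of Theorem \ref{th1}. If $c_1$ is odd and $c_3$ is even, then $c_1^{n}+c_1^{n-3}c_3$ is odd, so for odd $k$ your own reduction gives $k^{n}B_{n}^{G}(h/k)\equiv \tfrac{1}{2}\pmod{\mathbb{Z}}$, contradicting the assertion. Concretely, take $c_1=1$ and $c_i=0$ for all $i\geq 2$: then $F(s)=s+s^{2}/2$, $G(t)=\sqrt{1+2t}-1$, $t/G(t)=(1+\sqrt{1+2t})/2$, all hypotheses of Theorem \ref{th1} hold, yet $B_{3}^{G}(x)=x^{3}+\tfrac{3}{2}x^{2}-\tfrac{3}{2}x+\tfrac{3}{2}$, so $1^{3}B_{3}^{G}(1)=\tfrac{5}{2}\notin\mathbb{Z}$ while Theorem \ref{th1} is satisfied ($\widetilde{B}_{3}^{G}(1)=1$). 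Thus the odd-$n$ claim is true only under the branch $c_1\equiv c_3\pmod 2$, where your parity observation (the two monomials have equal parity, hence their sum is even, hence $B_{n}^{G}\in\mathbb{Z}$ and the conclusion follows from Theorem \ref{th1} alone) does finish the proof. The paper's one-line proof passes over this point silently, so what you flagged as ``the main obstacle'' is in fact a genuine obstruction: to make the odd case correct you must either discard the alternative ``$c_1$ odd and $c_3$ even'' from the hypotheses, or weaken the odd-$n$ conclusion to $k^{n}B_{n}^{G}(h/k)\in\tfrac{1}{2}\mathbb{Z}$.
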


\begin{proof}

If $s$ is a positive integer, then, under the same hypotheses of Theorem \ref{th1}, we have
\begin{equation}
\left( s^{n}-1\right) \sum_{\overset{p-1\mid n}{p \nmid s}}
c_{p-1}^{n/(p-1)}/p\in \mathbb{Z}\text{.}
\label{lemma1}
\end{equation}
This is a consequence of Fermat's little Theorem: if $s\in \mathbb{Z}$, and $p \nmid s$, then $s^{p-1}\equiv 1$ mod $p$. Therefore, if $c_{p-1}\equiv 1$ mod $p$ for all primes $p\geq 2 $, we deduce the congruence (\ref{lemma1}).

In order to obtain the result, it is sufficient to combine Theorem \ref{th1}, the universal von Staudt congruences (\ref{US1}), (\ref{US2}) and relation \eqref{lemma1}.
\end{proof}

Strictly related to the AM theorem is an interesting problem proposed to the author by A. Granville \cite{priv}.

\textbf{Problem}: To classify the sequences of polynomials $\{P_n(x)\}_{n\in \mathbb{N}}$
satisfying the Almkvist--Meurman property
\begin{equation}
k^{n}P_{n}\left( \frac{h}{k}\right) \in \mathbb{Z}\text{, }\forall \text{ }%
h,k,n\in \mathbb{N}, \hspace{3mm} k\neq0  \label{G}.
\end{equation}
We shall call this class of sequences the \textit{Granville class}.

We observe that Theorem \ref{th1} provides a tool for generating a large family of polynomial sequences satisfying the property (\ref{G}).

%It is an open question whether other classes of polynomials of the type (\ref{G}) exist.

\section{The Todd genus, the $L$- and $A$-genera and related universal polynomials}

In this section, we will show that there exists a close connection between the $m$--sequences of Hirzebruch's theory of genera and the Lazard formal group, via the universal congruences previously discussed. To this aim, we will introduce two classes of polynomials related to specific cases of the $K$--genus of an almost complex manifold, that can be interpreted as particular instances of the construction of universal Bernoulli polynomials.

To fix the notation, let $R$ be a commutative ring with identity, and $\mathcal{R}=R\left[p_1,p_2,\ldots\right]$ be the ring of polynomials in the indeterminates $p_i$ with coefficients in $R$. We assume that the product $p_{k_1}\ldots p_{k_r}$ has weight $k_1+\ldots k_r$, so that $\mathcal{R}=\sum_{n=0}^{\infty}\mathcal{R}_{n}$, where $\mathcal{R}_{n}$ contains only polynomials of degree $n$. Consider a \textit{multiplicative sequence} (or $m$--sequence) of polynomials $\{K_n\}$ in the indeterminates $p_i$ with $K_{0}=1$ and $K_{n}\in \mathcal{R}_{n}, j\in\mathbb{N}$. As is well known, an $m$--sequence is completely determined by the specification of the associated characteristic power sequence $Q(z)$ (see \cite{Hirz} for details).

The polynomials  $\{T_{k}\left(c_1,\ldots,c_k \right)\}$ of the $m$--sequence associated with the power series $Q(x)=x/\left(1-e^{-x}\right)$ are called the Todd polynomials (here we adopt the conventional notation $x=z^2$ and
\[
\sum_{i=0}^{\infty}p_i(-z)^{i}=\left(\sum_{j=0}^{\infty} c_{j}(-x)^j \right)\left(\sum_{i=0}^{\infty}c_i x^{i}\right)
\]
for the relation among the $p_i$'s and the $c_i$'s).

Now, let $M_{n}$ be a compact, differentiable of class $C^{\infty}$ almost complex manifold of dimension $n$, and let $c_{i}\in H^{2i}(M_n,\mathbb{Z})$ denote the Chern classes of the tangent $GL(n,\mathbb{C})$--bundle of $M_{n}$. The $K$--genus of $M_n$, denoted by $K_{n}[M_n]$, is a ring homomorphism with respect to the cartesian product of two almost complex manifolds.

The genus associated with the sequence of Todd polynomials can be introduced in general for an admissible space $A$ , i.e. a locally compact, finite dimensional space, with $A=\cup_{j\in\mathbb{N}} S_{j}$, where $S_j$ are compact sets. Let us denote by $\mathfrak{b}$ a continuous $GL(q,\mathbb{C})$--bundle over $A$, with Chern classes $c_i\in H^{2i}(A,\mathbb{Z})$. The \textit{total Todd class} is given by $Td(\mathfrak{b})=\sum_{k=0}^{\infty} T_{k}(c_1,\ldots,c_k)$. When $q=1$ and $c_1(\mathfrak{b})=t\in H^2(A,\mathbb{Z})$, we have $Td(\mathfrak{b})=t/\left(1-e^{-t}\right)$. For an almost complex manifold $M_n$, the Todd genus is defined to be the rational number $T_n[M_n]=Td(\mathfrak{b}(M_n))^{(2n)}$, where $u^{(2n)}$ denotes the $2n$--dimensional component of $u\in H^{*}(M_n)$.

A similar construction applies in relation with the theory of $L$--genus, which is the genus associated with the $m$--sequence with characteristic power series $Q(\sqrt{z})=\sqrt{z}/\tanh\sqrt{z}$, and denoted by $L_{k}(p_1,\ldots,p_k)$. The $A$--genus is associated with the $m$--sequence with characteristic power series $Q(z)=2 \sqrt z/\sinh 2\sqrt z$.

The aim of this section is to study the algebro--geometric properties of the $m$--sequences discussed above.
We introduce now two families of polynomials related to the genus of these sequences.
\begin{definition}
We shall call $\alpha$--polynomials the family of polynomials generated by the characteristic power series associated with the A--genus:
\begin{equation}
\frac{2 t e^{yt}}{\sinh{2 t}}=\sum_{k\geq 0}\alpha_{k}\left( y\right) \frac{t^{k}}{k!}.
\end{equation}
\end{definition}
%The names come from the fact that when $a=1$, $x=0$, the generating functions of the associated numbers coincide with the characteristic power series associated to the Todd genus and the L--genus respectively.
The  $\alpha$--polynomials are particular cases of the Universal Bernoulli polynomials \eqref{I.3}; also the Todd polynomials are directly related to the N\"orlund Bernoulli polynomials.
The next result relates these polynomials with the universal AM congruences.
\begin{proposition} \label{Lemma1}
 Let $\widetilde{\alpha}_{n}(x)$ denote the polynomials generated by
 \begin{equation}
\frac{2 t (e^{yt}-1)}{\sinh{2 t}}=\sum_{k\geq 0}\widetilde{\alpha}_{k}\left( y\right) \frac{t^{k}}{k!}.
\end{equation}
The sequence $\{\widetilde{\alpha}_{n}(x)\}_{n\in\mathbb{N}}$ is in the Granville class.
\end{proposition}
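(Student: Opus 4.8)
The plan is to realize the sequence $\{\widetilde{\alpha}_n\}$ as the universal truncated Bernoulli sequence $\{\widetilde{B}_n^{G}\}$ attached to one specific formal group exponential, and then to invoke Theorem \ref{th1} directly. Comparing the two generating functions, the defining relation for the $\alpha$--polynomials reads $\frac{2t}{\sinh 2t}\,e^{yt}=\frac{t}{G(t)}\,e^{yt}$, which forces
\[
G(t)=\frac{\sinh 2t}{2}=t+\frac{2}{3}t^{3}+\cdots,
\]
and this is a genuine formal group exponential, since it has the shape $\sum_{i}\gamma_i\, t^{i+1}/(i+1)$ with $\gamma_0=1$. Subtracting the value at $y=0$ term by term gives at once $\widetilde{\alpha}_n(y)=B_n^{G}(y)-B_n^{G}(0)=\widetilde{B}_n^{G}(y)$, so the assertion is precisely that this $G$ enjoys the Almkvist--Meurman property guaranteed by Theorem \ref{th1}. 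It therefore remains only to verify that the indeterminates $c_i$ determined by this $G$ satisfy the three hypotheses of that theorem.

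First I would compute the $c_i$. Since the logarithm $F=G^{-1}$ is obtained by inverting $s=\sinh(2t)/2$, one finds $F(s)=\tfrac12\operatorname{arcsinh}(2s)$. Expanding the arcsinh series and matching with $F(s)=\sum_{i\ge 0}c_i\, s^{i+1}/(i+1)$ yields
\[
c_{2n}=(-1)^{n}\binom{2n}{n},\qquad c_{2n+1}=0\quad(n\ge 0),
\]
so that $c_0=1$, $c_1=c_3=0$, $c_2=-2$, $c_4=6$, $c_6=-20$, and in particular every $c_i$ is an integer, as required.

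The parity hypothesis is then immediate, since $c_1=c_3=0$ gives $c_1\equiv c_3\pmod 2$. The substantive step — the only one that is not a formal manipulation — is the arithmetic condition $c_{p-1}\equiv 0,1\pmod p$ for odd primes $p$. For such $p$ write $p-1=2m$ with $m=(p-1)/2$, so that $c_{p-1}=(-1)^{m}\binom{p-1}{m}$. Using the elementary congruence $\binom{p-1}{k}\equiv(-1)^{k}\pmod p$ (each factor $p-j\equiv -j$ in the numerator, the denominator being prime to $p$) with $k=m$ gives $\binom{p-1}{m}\equiv(-1)^{m}\pmod p$, whence $c_{p-1}\equiv(-1)^{m}(-1)^{m}=(-1)^{p-1}\equiv 1\pmod p$. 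Thus all hypotheses of Theorem \ref{th1} hold for $G(t)=\sinh(2t)/2$, and applying that theorem yields $k^{n}\widetilde{\alpha}_n(h/k)=k^{n}\widetilde{B}_n^{G}(h/k)\in\mathbb{Z}$ for all admissible $h,k,n$, the boundary case $h=0$ being trivial because $\widetilde{\alpha}_n(0)=0$. Hence $\{\widetilde{\alpha}_n\}$ lies in the Granville class. I expect the central binomial identification of the $c_i$ together with the verification $c_{p-1}\equiv 1\pmod p$ to be the only genuinely nontrivial points; everything else is bookkeeping once the identification $\widetilde{\alpha}_n=\widetilde{B}_n^{G}$ is in place.
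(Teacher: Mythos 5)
Your proof is correct, but it follows a genuinely different route from the paper's. The paper's entire argument is the hyperbolic identity $\frac{t}{2\sinh t}=\frac{t}{e^{t}-1}-\frac{t}{e^{2t}-1}$, which (after rescaling $t\mapsto 2t$ and multiplying by $2$) exhibits the characteristic series $2t/\sinh 2t$ as a difference of two classical exponential-type series, $\frac{4t}{e^{2t}-1}-\frac{4t}{e^{4t}-1}$; the arithmetic of the $\widetilde{\alpha}_{n}$ is thereby outsourced to that of rescaled ordinary Bernoulli polynomials, for which the required congruences and the Almkvist--Meurman property are classical, and the remaining bookkeeping is explicitly ``left to the reader.'' You never use this decomposition: instead you identify the formal group exponential $G(t)=\sinh(2t)/2$, invert it to get $F(s)=\tfrac{1}{2}\operatorname{arcsinh}(2s)$, read off $c_{2n}=(-1)^{n}\binom{2n}{n}$, $c_{2n+1}=0$ (all integers), and verify the hypotheses of Theorem \ref{th1} head-on, the one substantive point being $c_{p-1}=(-1)^{m}\binom{p-1}{m}\equiv(-1)^{2m}=1 \pmod{p}$ for $m=(p-1)/2$, via $\binom{p-1}{k}\equiv(-1)^{k}\pmod{p}$. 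All of your computations check out, as do the parity condition ($c_{1}=c_{3}=0$) and the boundary case $h=0$ needed for the Granville class. What the paper's identity buys is brevity and independence from any coefficient computation; what your argument buys is a self-contained proof that actually supplies the omitted details in exactly the form in which Theorem \ref{th1} is stated, together with the explicit and independently interesting fact that the logarithm of this formal group has signed central binomial coefficients --- which is precisely why the Fermat-type hypothesis $c_{p-1}\equiv 0,1\pmod{p}$ holds.
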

\begin{proof}
It is sufficient to observe that the following identity holds:
\beq
\frac{t}{2 \sinh t}=t/(e^t-1)-t/(e^{2t}-1).
\eeq
Consequently, the sequence $\{\widetilde{\alpha}_{n}(x)\}_{n\in\mathbb{N}}$ satisfies the hypotheses of Theorem \ref{th1}. The details are left to the reader.
\end{proof}
The Cauchy formula relates $Q(z)$ with $s_n$, which is defined to be the coefficient of $p_n$ in the polynomial $K_n$:
\beq
1-z\frac{d}{dz} \log Q(z)=\sum_{j=0}^{\infty}(-1)^{j}s_j z^{j}.
\eeq
A similar construction can be proposed in relation with the $L$--genus.
\begin{definition}
We shall define the $\lambda$--polynomials to be the family of polynomials generated by the characteristic power series associated with the L--genus:
\begin{equation}
\frac{t e^{yt}}{\tanh{ t}}=\sum_{k\geq 0}\lambda_{k}\left( y\right) \frac{t^{k}}{k!}.
\end{equation}
\end{definition}
\noindent A completely analogous result also holds for the associated polynomials $\{\widetilde{\lambda}_{n}(x)\}_{n\in\mathbb{N}}$.
\begin{proposition} \label{Lemma2}
 Let $\widetilde{\lambda}_{n}(x)$ denote the polynomials generated by
 \begin{equation}
\frac{t (e^{yt}-1)}{\tanh{t}}=\sum_{k\geq 0}\widetilde{\lambda}_{k}\left( y\right) \frac{t^{k}}{k!}.
\end{equation}
The sequence $\{\widetilde{\lambda}_{n}(x)\}_{n\in\mathbb{N}}$ is in the Granville class.
\end{proposition}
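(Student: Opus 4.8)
The plan is to mirror exactly the strategy used for Proposition~\ref{Lemma1}, since the $\lambda$--polynomials stand in the same relation to the $L$--genus (characteristic series $t/\tanh t$) as the $\alpha$--polynomials do to the $A$--genus. The goal is to exhibit $t/\tanh t$ as a $G(t)$--type exponential arising from an admissible sequence of indeterminates $c_i \in \mathbb{Z}$ satisfying the congruence hypotheses of Theorem~\ref{th1} (namely $c_{p-1}\equiv 0,1 \bmod p$ for all odd primes $p$, together with the parity condition on $c_1,c_3$). Once this is done, Theorem~\ref{th1} immediately yields $k^n \widetilde{\lambda}_n(h/k)\in\mathbb{Z}$, i.e.\ membership in the Granville class.

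First I would rewrite $t/\tanh t$ as a difference of elementary exponentials, in analogy with the identity used in Proposition~\ref{Lemma1}. The relevant algebraic identity is
\beq
\frac{t}{\tanh t}=\frac{2t}{e^{2t}-1}+t,
\eeq
equivalently $\coth t = 1 + 2/(e^{2t}-1)$, which follows directly from $\tanh t=(e^{2t}-1)/(e^{2t}+1)$. This expresses $t e^{yt}/\tanh t$ in terms of the standard Bernoulli generating function $2t/(e^{2t}-1)$ rescaled by a factor of $2$, plus a polynomial correction. The key point is that this representation identifies the $\lambda$--polynomials with a specialization of the universal Bernoulli polynomials \eqref{I.3} for an explicit integer choice of the $c_i$: writing $1-e^{-x}$ versus $\tanh$ one reads off the sequence $\{c_i\}$ and checks it lands in $\mathbb{Z}$.

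Next I would verify the arithmetic hypotheses of Theorem~\ref{th1} for this sequence. Because $t/\tanh t$ is an even function of $t$, the odd--index structure is rigidly constrained, which makes the parity condition on $c_1$ and $c_3$ straightforward to confirm. The condition $c_{p-1}\equiv 0,1 \bmod p$ for odd primes should follow from the fact that the relevant coefficients are governed by ordinary Bernoulli numbers at argument $2t$, for which the classical Clausen--von~Staudt congruence controls denominators prime--by--prime. With the hypotheses verified, Theorem~\ref{th1} applies verbatim and gives the Granville property for $\{\widetilde{\lambda}_n(x)\}$.

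The main obstacle I anticipate is purely bookkeeping rather than conceptual: correctly extracting the integer sequence $\{c_i\}$ from the $\tanh$--representation and confirming that the scaling by $2$ inside $e^{2t}-1$ does not spoil the congruences at the prime $p=2$ (the most delicate case, since that is where the parity hypotheses live). Since Proposition~\ref{Lemma1} handled the structurally identical $\sinh$--case and the author there wrote ``the details are left to the reader,'' I expect the intended proof here is equally brief: state the defining identity for $t/\tanh t$, observe that the resulting sequence satisfies the hypotheses of Theorem~\ref{th1}, and conclude. Thus the proof reduces to displaying the correct elementary identity and invoking Theorem~\ref{th1}.
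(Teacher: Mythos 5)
Your overall strategy is the same as the paper's. The paper proves Proposition \ref{Lemma1} by exhibiting an elementary identity relating the characteristic series to exponential-type generating functions and then invoking Theorem \ref{th1}, and for Proposition \ref{Lemma2} it offers no separate proof at all, declaring the result ``completely analogous.'' Your identity $t/\tanh t = 2t/(e^{2t}-1)+t$ is the correct analogue of the paper's identity $t/(2\sinh t)=t/(e^t-1)-t/(e^{2t}-1)$, and your conclusion is correct.

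There is, however, one step whose justification as written would fail, although the fix is a one-line computation. The hypotheses of Theorem \ref{th1} bear on the coefficients $c_i$ of the formal group logarithm $F=G^{-1}$; here $G(t)=\tanh t$, so $F(s)=\operatorname{arctanh}(s)=\sum_{k\geq 0}s^{2k+1}/(2k+1)$, and comparing with \eqref{I.1} gives $c_{2k}=1$, $c_{2k+1}=0$. Hence $c_i\in\mathbb{Z}$, $c_{p-1}=1$ for every odd prime $p$ (since $p-1$ is even), and $c_1=c_3=0$, so all hypotheses hold trivially and directly. Your proposed mechanism --- deducing $c_{p-1}\equiv 0,1 \bmod p$ from the classical Clausen--von Staudt congruence for the Bernoulli numbers attached to $2t/(e^{2t}-1)$ --- runs the implication backwards: in Theorem \ref{th1} the $c_i$ are the input data, and von Staudt--type congruences are \emph{consequences} that control the denominators of the $B_n^G$; they cannot be used to establish properties of the $c_i$. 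Note also that your identity in fact yields a proof that bypasses Theorem \ref{th1} altogether: it gives $\widetilde{\lambda}_n(y)=2^n\tilde{B}_n(y/2)+ny^{n-1}$ for $n\geq 2$, where $\tilde{B}_n(t)=B_n(t)-B_n(0)$ is the classical difference appearing in \eqref{II.29}, whence $k^n\widetilde{\lambda}_n(h/k)=(2k)^n\tilde{B}_n\bigl(h/(2k)\bigr)+nh^{n-1}k\in\mathbb{Z}$ by the classical Almkvist--Meurman theorem; this is arguably the shortest complete argument.
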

\section{Universal congruences and integer sequences}

\subsection{Sequences of integer numbers}
In this section, both sequences of integer numbers and Appell sequences of
polynomials with integer coefficients are constructed as a byproduct of
the previous theory.

\begin{lemma}
\label{lem5} Consider a sequence of the form
\begin{equation}
\frac{t}{G_{1}\left( t\right) }-\frac{t}{G_{2}\left( t\right) }%
=\sum_{k=0}^{\infty }\frac{\,N_{k}}{2 \hspace{1mm} k!}t^{k}\text{,}  \label{7.1}
\end{equation}%
where $G_{1}\left( t\right) $ and $G_{2}\left( t\right) $ are formal group
exponentials, defined as in formula (\ref{I.2}). Assume that $c_i^{G_{j}}\in\mathbb{Z}$ for all $i\in\mathbb{N}$, $j=1,2$ and $c_{p-1}^{G_{1}}\equiv c_{p-1}^{G_{2}}$ $mod$ $p$ for all $p\geq 2$ (here $c_{n}^{G_{j}}$ denotes the $n^{th}$ coefficient of the expansion \eqref{I.1} for the logarithm associated to $G_j$). Then $\{N_{k}\}_{k\in \mathbb{N}}$ is a sequence of integers.
\end{lemma}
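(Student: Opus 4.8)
The plan is to read the numbers $N_k$ off from the universal Bernoulli numbers and then reduce the integrality claim to the universal von Staudt congruences. By \eqref{I.3} with $\alpha=1$ and $x=0$ one has $t/G_j(t)=\sum_{k\ge0}B_k^{G_j}t^k/k!$, so comparing coefficients in \eqref{7.1} gives $N_k=2\bigl(B_k^{G_1}-B_k^{G_2}\bigr)$. Hence the statement is equivalent to $B_k^{G_1}-B_k^{G_2}\in\tfrac12\mathbb{Z}$ for every $k$, and the factor $2$ in the definition is precisely what is needed to absorb the denominator $2$ that von Staudt always produces.

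First I would dispose of the trivial cases: $B_0^{G_j}=1$ gives $N_0=0$, while $B_1^{G_j}=c_1^{G_j}/2$ gives $N_1=c_1^{G_1}-c_1^{G_2}\in\mathbb{Z}$ by the integrality of the $c_i$. For $k\ge2$ I would split on parity. When $k$ is odd, \eqref{US2} represents each $B_k^{G_j}$ modulo $\mathbb{Z}$ as $\bigl((c_1^{G_j})^k+(c_1^{G_j})^{k-3}c_3^{G_j}\bigr)/2$, a rational number with integer numerator and denominator $2$; subtracting the two expressions and using $c_i^{G_j}\in\mathbb{Z}$ gives $B_k^{G_1}-B_k^{G_2}\in\tfrac12\mathbb{Z}$ at once, so $N_k\in\mathbb{Z}$. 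The substantive case is $k$ even, where \eqref{US1} yields
\[
B_k^{G_1}-B_k^{G_2}\equiv -\sum_{\substack{p-1\mid k\\ p\ \text{prime}}}\frac{(c_{p-1}^{G_1})^{k/(p-1)}-(c_{p-1}^{G_2})^{k/(p-1)}}{p}\pmod{\mathbb{Z}}.
\]
The term $p=2$ contributes $\bigl((c_1^{G_1})^k-(c_1^{G_2})^k\bigr)/2\in\tfrac12\mathbb{Z}$, which is harmless, so the whole point is to show that every odd-prime term is an integer.

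This last step is exactly where the hypothesis $c_{p-1}^{G_1}\equiv c_{p-1}^{G_2}\pmod p$ is used, and I expect it to be the only genuine obstacle. For an odd prime $p$ with $p-1\mid k$ one raises that congruence to the exponent $k/(p-1)$, which is legitimate by the elementary rule that $a\equiv b\pmod p$ implies $a^m\equiv b^m\pmod p$; hence the numerator $(c_{p-1}^{G_1})^{k/(p-1)}-(c_{p-1}^{G_2})^{k/(p-1)}$ is divisible by $p$ and the term lies in $\mathbb{Z}$. Discarding all the odd-prime terms leaves only the $p=2$ contribution, so $B_k^{G_1}-B_k^{G_2}\in\tfrac12\mathbb{Z}$ and $N_k=2\bigl(B_k^{G_1}-B_k^{G_2}\bigr)\in\mathbb{Z}$. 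Everything else is direct substitution into \eqref{US1}–\eqref{US2} together with the assumption $c_i^{G_j}\in\mathbb{Z}$.
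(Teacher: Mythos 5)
Your proof is correct and takes essentially the same route as the paper's: read off $N_k=2\bigl(B_k^{G_1}-B_k^{G_2}\bigr)$ from \eqref{7.1}, then apply the universal von Staudt congruences \eqref{US1}--\eqref{US2}, using the hypothesis $c_{p-1}^{G_1}\equiv c_{p-1}^{G_2}\pmod{p}$ to clear the odd-prime denominators and the explicit factor $2$ to absorb the remaining half-integers. The only (immaterial) difference is that the paper also invokes the hypothesis at $p=2$, making the even-index differences genuine integers, whereas you simply let the factor $2$ in \eqref{7.1} absorb that term.
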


\begin{proof} The Bernoulli--type numbers $B_{k}^{G_1}$ and $B_{k}^{G_2}$ associated with the formal
group exponentials $G_{1}(t)$ and $G_{2}(t)$, under the previous assumptions for $k$ even
must satisfy Clarke's universal congruence (\ref{US1}). For $k$ odd, these
numbers are integers or half--integers, due to (\ref{US2}). It follows that the
difference $B_{k}^{G_1}-B_{k}^{G_2}$ for  $k$ even is an
integer and for $k$ odd is a half--integer or an integer. The result follows from the definition of $\{N_{k}\}_{k\in \mathbb{N}}$ in \eqref{7.1}.
\end{proof}
\subsection{Construction of integer sequences}

a) The characteristic power series obtained as the difference between the power series associated with the $L$--genus and that one associated with the Todd genus, i.e. $Q(t)=t/\tanh t- t/(1-e^{-t})$ is the generating function of the sequence
\beq
-1,1,0,-1,0,3,0,-17,0,155,\ldots
\eeq

b) In \cite{Tempesta3}, realizations of the universal Bernoulli polynomials were constructed by using the finite operator theory
\cite{Rota}. Here we quote two generating functions of these classes of polynomials, related to certain difference delta operators:
%\begin{equation*}
%\sum_{k=0}^{\infty }\frac{B_{k}^{III}\left( x\right) }{k!}t^{k}=-\frac{%
%te^{xt}}{e^{2t}-2e^{t}+e^{-t}},
%\end{equation*}
\beqa
\nn \sum_{k=0}^{\infty }\frac{B_{k}^{V}\left( x\right) }{k!}t^{k}&=&\frac{te^{xt}}{%
e^{3t}-2e^{2t}+2e^{t}-2e^{-t}+e^{-2t}}\text{,}  \\
\sum_{k=0}^{\infty }\frac{B_{k}^{VII}\left( x\right) }{k!}t^{k}&=&\frac{%
-te^{xt}}{e^{4t}-e^{3t}+e^{2t}-2e^{t}+e^{-t}-e^{-2t}+e^{-3t}}\text{.}
\label{BVII}
\eeqa

The sequences associated with the generating functions (\ref{BVII}) satisfy the classical
Clausen--von Staudt congruence. Since $N_{k}=2(B_{k}^{G_{1}}-B_{k}^{G_2})$, they can be used to
construct integer sequences of numbers and polynomials. Here are reported
some representatives of generating functions of integer sequences which can
be obtained from the previous considerations.

b.1)
\begin{equation*}
\frac{t(1+e^{t})}{2(1+e^{t}-e^{2t})}\text{.}
\end{equation*}%
The sequence generated is $\hspace{3mm} 2,6,\text{ }39,\text{ }324,\text{ }3365,\text{ }41958,\ldots$

b.2)
\begin{equation*}
\frac{-2t\left( 1+2\cosh t+4\cosh 2t-6\sinh t\right) }{(-6+8\cosh t)(2+\cosh
t-\cosh 2t-\sinh t+\sinh 2t+2\sinh 3t)}\text{.}
\end{equation*}%
The sequence generated is%
$
\hspace{3mm} -7,\text{ }61,-642,\text{ }10127,-207110,\text{ }5001663,...
$

\subsection{Polynomial sequences with integer coefficients}

As an immediate consequence of the previous results, we can also construct new sequences of Appell polynomials possessing \textit{integer coefficients}.

\begin{lemma}
Assume the hypotheses of Lemma \ref{lem5}. Then any sequence of
polynomials of the form
\begin{equation}
\left[\frac{t}{G_{1}\left( t\right)} - \frac{t}{G_{2}\left( t\right)}\right]
e^{xt}=\sum_{k=0}^{\infty }\frac{\,N_{k}\left( x\right) }{2 k!}t^{k}
\label{7.3}
\end{equation}
is an Appell sequence with integer coefficients.
\begin{proof}
The generating function \eqref{7.3}, being of the form $f(t)e^{xt}$, determines an Appell sequence of polynomials. In particular, the coefficients of each polynomial $N_{k}\left( x\right)$ of the sequence are determined by the function $f(t)$. The thesis follows by combining the result of the previous Lemma \ref{lem5} with the choice $f(t)=\left[\frac{t}{G_{1}\left( t\right)} - \frac{t}{G_{2}\left( t\right)}\right]$.
\end{proof}
\end{lemma}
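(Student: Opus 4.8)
The plan is to reduce the statement to Lemma \ref{lem5} together with two elementary observations about generating functions of the form $f(t)e^{xt}$. Writing $f(t) := \frac{t}{G_{1}(t)} - \frac{t}{G_{2}(t)} = \sum_{j\geq 0}\frac{N_j}{2\,j!}\,t^{j}$, I would first record that by Lemma \ref{lem5} every coefficient $N_j$ is an integer; this is the only substantive input, and it carries the entire arithmetic weight of the argument. Everything else is formal manipulation of power series.

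First I would verify the Appell property. Differentiating the defining identity \eqref{7.3} with respect to $x$ gives $\frac{\partial}{\partial x}\bigl(f(t)e^{xt}\bigr)=t\,f(t)e^{xt}$. Expanding both sides in powers of $t$ and matching coefficients yields $N_k'(x)=k\,N_{k-1}(x)$ for every $k\geq 1$, which is exactly the Appell recurrence already used in the paper for the universal Bernoulli polynomials. Hence $\{N_k(x)\}_{k\in\mathbb{N}}$ is an Appell sequence, and this conclusion is independent of any integrality consideration.

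Next I would make the coefficients explicit. Taking the Cauchy product of $f(t)=\sum_{j}\frac{N_j}{2\,j!}t^{j}$ with $e^{xt}=\sum_{m}\frac{x^{m}}{m!}t^{m}$ and comparing with the right-hand side of \eqref{7.3}, the normalization factor $\tfrac12$ cancels on both sides and one obtains the closed form
\[
N_k(x)=\sum_{j=0}^{k}\binom{k}{j}N_j\,x^{k-j}.
\]
Since each $N_j\in\mathbb{Z}$ by Lemma \ref{lem5} and each binomial coefficient $\binom{k}{j}$ is an integer, every coefficient of $N_k(x)$ is an integer, which is the assertion.

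The main point to watch is bookkeeping rather than a genuine obstacle: the factor $2$ in \eqref{7.3} must be propagated consistently through the Cauchy product (it cancels, as noted), and one should observe that $f(0)=0$, since $t/G_{1}(t)$ and $t/G_{2}(t)$ both tend to $1$ as $t\to 0$. Consequently $N_0=0$, the leading monomial $x^{k}$ is absent, and $N_k(x)$ has degree at most $k-1$; this affects neither the Appell recurrence nor the integrality claim. No estimate or congruence beyond Lemma \ref{lem5} is required, so the difficulty is entirely concentrated in that earlier result.
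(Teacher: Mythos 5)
Your proof is correct and follows essentially the same route as the paper: the Appell property comes for free from the form $f(t)e^{xt}$, and the integrality of the coefficients reduces, via the explicit expansion $N_k(x)=\sum_{j=0}^{k}\binom{k}{j}N_j\,x^{k-j}$, to the integrality of the numbers $N_j$ guaranteed by Lemma \ref{lem5}. Your version merely makes explicit what the paper leaves implicit (the Cauchy product, the cancellation of the factor $2$, and the observation that $N_0=0$), which is a useful but not substantively different elaboration.
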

\noindent As an example, consider the sequence of polynomials $\left\{p_{n}(x)\right\}_{n\in\mathbb{N}}$, generated by
\begin{equation}
-\frac{te^{-\frac{3t}{2}}\sec h(\frac{t}{2})\left[
4+e^{t}(1+2e^{t}(-1+e^{t}))\right] }{2(-3+4\cosh t)(1+(-2+4\cosh t)\sinh t)}
e^{xt}\text{.}\label{genf1}
\end{equation}
It is easy to verify that (\ref{genf1}) is the generating function of a sequence of Appell polynomials. The first polynomials of the sequence are
\begin{equation*}
p_{0}(x)=-5,\qquad p_{1}(x)=29-10x,
\end{equation*}
\begin{equation*}
p_{2}(x)= -150+87x-15x^{2},
\end{equation*}
\begin{equation*}
p_{3}(x)=1279-600x+174x^{2}-20x^{3}
\end{equation*}
\begin{equation*}
p_{4}(x)=-17770+6395x-1500x^{2}+290x^{3}-25x^{4},\ldots .
\end{equation*}

\section{A class of Riemann--Hurwitz zeta functions and their values at negative integers}

The previous results enable us to compute special values of a new class of zeta functions, recently introduced in \cite{Tempesta3} (here we propose a different formulation of this class). An important particular example of this class is the celebrated Riemann--Hurwitz zeta function. In order to make this Section self--consistent, some necessary definitions are recalled.

\subsection{Hurwitz zeta functions and formal group laws}

\begin{definition} \label{Hur}
\textit{Let $G(t)$ be a formal group exponential, such that $e^{-at}/G(t)$ is a $%
C^{\infty }\,$function over $\mathbb{R}_{+}$, rapidly decreasing at
infinity. The generalized Hurwitz zeta function associated with G is the
function $\zeta ^{G}\left(s,a\right) $, defined for \textit{Re}$(s)>1$ and $a>0$ by}
\begin{equation}
\zeta ^{G}\left( s,a\right) =\frac{1}{\Gamma \left( s\right) }%
\int_{0}^{\infty}\frac{e^{-a x}}{G\left( x\right) }x^{s-1}dx\label{RH}
\text{.}
\end{equation}
\end{definition}

 When $G(x)= 1-e^{-x}$,  we obtain the classical Hurwitz zeta function, which converges absolutely for \textit{Re}(s) $>1$ and extends to a meromorphic function in $\mathbb{C}$, represented as a Mellin transform
\begin{equation*}
\zeta \left( s,a\right) =\sum_{n=0}^{\infty }\left( n+a\right) ^{-s}=\frac{1}{\Gamma \left( s\right) }\int_{0}^{\infty }%
\frac{e^{-ax}}{1-e^{-x}}x^{s-1}dx.
\end{equation*}%

\noindent By means of an analysis of the singularities of both the integral in \eqref{RH} and of $\Gamma(s)$, we deduce that $\zeta^{G}(s,a)$ can be analytically continued to the whole complex plane, with a simple pole at $s=1$. Consequently, we get the following result, generalizing a classical property of the standard Bernoulli polynomials.

\begin{proposition}\label{corollary}
For any $m\in\mathbb{N}$, the following property holds:
\begin{equation}
\zeta ^{G}\left( -m, a\right) =-\frac{B_{m+1}^{{G'}}\left( a\right) }{m+1}\text{,}\label{zeta}
\end{equation}
where according to eq. \eqref{I.3}, $B_{m}^{{G'}}\left( x\right) $ is the m--th generalized Bernoulli
polynomial associated with the formal group exponential ${G}'(t):=-G(-t)$ (with $\alpha=1$).
\end{proposition}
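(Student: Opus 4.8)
The plan is to adapt the classical Mellin-transform computation of the Hurwitz values $\zeta(-m,a)=-B_{m+1}(a)/(m+1)$ to the formal-group setting, the only genuinely new ingredient being the sign twist built into $G'(t)=-G(-t)$. First I would record the dictionary between the two families of Bernoulli-type polynomials. Substituting $u=-t$ in \eqref{I.3} (with $\alpha=1$) gives
\[
\frac{t}{G'(t)}\,e^{at}=\frac{t}{-G(-t)}\,e^{at}=\left.\frac{u}{G(u)}\,e^{-au}\right|_{u=-t},
\]
and comparing the coefficients of $t^k$ on both sides yields $B_k^{G'}(a)=(-1)^k B_k^G(-a)$ for all $k$. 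This is exactly the identity that will turn the output of the analytic computation, which naturally involves $B_{m+1}^G(-a)$, into the form displayed in \eqref{zeta}.

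Next I would split the defining integral as $\int_0^\infty=\int_0^1+\int_1^\infty$ and treat the two pieces separately. By the standing hypothesis in Definition \ref{Hur} that $e^{-ax}/G(x)$ is rapidly decreasing at infinity, the tail $\int_1^\infty e^{-ax}G(x)^{-1}x^{s-1}\,dx$ converges for every $s$ and defines an entire function; since $1/\Gamma(s)$ vanishes at every nonpositive integer, this tail contributes nothing to $\zeta^G(-m,a)$. For the piece near the origin I would use that $G(x)=x+O(x^2)$, so $h(x):=xe^{-ax}/G(x)$ is holomorphic at $0$ with $h(x)=\sum_{k\ge 0}B_k^G(-a)\,x^k/k!$, again by \eqref{I.3}. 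Writing $\int_0^1 e^{-ax}G(x)^{-1}x^{s-1}\,dx=\int_0^1 h(x)\,x^{s-2}\,dx$ and integrating the partial sums term by term, the term of index $k$ gives $\frac{B_k^G(-a)}{k!}\cdot\frac{1}{s+k-1}$, while the Taylor remainder is holomorphic in a half-plane whose boundary recedes to the left as more terms are extracted. This simultaneously furnishes the meromorphic continuation and shows that $\int_0^1$ has a simple pole at $s=1-k$ with residue $B_k^G(-a)/k!$.

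To finish I would evaluate at $s=-m$. The only relevant pole is the one with $k=m+1$, of residue $B_{m+1}^G(-a)/(m+1)!$, while $\Gamma(s)$ has a simple pole at $s=-m$ with residue $(-1)^m/m!$, so $1/\Gamma(s)$ has there a simple zero with leading coefficient $(-1)^m m!$. Multiplying, the poles and zeros cancel and leave
\[
\zeta^G(-m,a)=(-1)^m\,\frac{B_{m+1}^G(-a)}{m+1};
\]
substituting $B_{m+1}^G(-a)=(-1)^{m+1}B_{m+1}^{G'}(a)$ from the first step collapses the sign to $-B_{m+1}^{G'}(a)/(m+1)$, which is \eqref{zeta}.

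The main obstacle is analytic rather than algebraic: one must rigorously justify the term-by-term integration and the meromorphic continuation, controlling the remainder integral $\int_0^1 R_N(x)x^{s-2}\,dx$ uniformly and invoking the rapid-decrease hypothesis to handle the tail. The secondary difficulty is purely a matter of bookkeeping the three sources of signs — the $(-1)^m$ from the residue of $\Gamma$, the $(-1)^{m+1}$ from the $G\leftrightarrow G'$ twist, and the evaluation at $-a$ versus $a$ — which must line up precisely to produce the clean expression in \eqref{zeta}.
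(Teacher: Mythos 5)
Your proposal is correct, and it follows essentially the same route the paper intends: the paper offers no detailed proof, merely asserting that the result follows from ``an analysis of the singularities of both the integral in \eqref{RH} and of $\Gamma(s)$,'' which is exactly the Mellin-transform singularity analysis you carry out. Your sign bookkeeping via $B_k^{G'}(a)=(-1)^k B_k^G(-a)$ is the right dictionary and reproduces the classical case $G(x)=1-e^{-x}$, $G'(t)=e^t-1$ correctly.
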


The main result of this section is the following interesting congruence for the generalized Riemann--Hurwitz zeta functions (\ref{RH}).

\begin{theorem}
\label{th3} Let $G\left( t\right)$ be a formal group exponential, according to Def. \ref{Hur}, such that $c_i \in \mathbb{Z}$ for all $i=1,2,\ldots$. Assume that $c_{p-1}\equiv 1$ mod $p$ for all primes $p\geq
2 $, and $c_1\equiv c_3$ mod $2$, or $c_1$ odd and $c_3$ even. Then

\begin{equation}
nk^{n} \zeta^{G}(1-n,h/k)\in \mathbb{Z}\text{,}  \label{RHc1}
\end{equation}

for all $h>0, k>0$ integers and $n$ odd, $n\geq3$.
%, and
%
%\begin{equation}
%nk^{n} \zeta^{G}(1-n,h/k) +n\sum_{p-1\mid n}\frac{%
%c_{p-1}^{n/p-1}}{p}\in \mathbb{Z}\text{,}  \label{RHc2}
%\end{equation}
%
%for $n$ even or n=1.
\end{theorem}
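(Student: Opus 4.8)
The plan is to reduce the statement to the Bartz--Rutkowski--type result of Theorem \ref{th2}, applied to a suitably reflected formal group exponential. First I would invoke Proposition \ref{corollary} with $m=n-1$, which gives
\[
\zeta^{G}(1-n,h/k)=-\frac{B_{n}^{G'}(h/k)}{n},\qquad G'(t):=-G(-t),
\]
so that $n\,k^{n}\zeta^{G}(1-n,h/k)=-k^{n}B_{n}^{G'}(h/k)$. Hence proving the theorem amounts to showing $k^{n}B_{n}^{G'}(h/k)\in\mathbb{Z}$ for $n\geq3$ odd, which is exactly the content of the second case of Theorem \ref{th2} --- provided that $G'$ itself satisfies the hypotheses of Theorem \ref{th1}.

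The key step is therefore to check that $G'(t)=-G(-t)$ inherits the required arithmetic conditions from $G$. I would first identify the logarithm $F'$ of $G'$: since $F$ is the compositional inverse of $G$, a direct substitution shows that $F'(s):=-F(-s)$ inverts $G'$, because $-F\bigl(-(-G(-t))\bigr)=-F(G(-t))=t$. Expanding $F'(s)=-\sum_{i}c_{i}(-s)^{i+1}/(i+1)$ then yields the sign relation $c_{i}^{G'}=(-1)^{i}c_{i}$ for the logarithm coefficients attached to $G'$. From this the hypotheses transfer immediately: each $c_{i}^{G'}$ is an integer; for every odd prime $p$ the index $p-1$ is even, whence $c_{p-1}^{G'}=c_{p-1}\equiv1\ (\mathrm{mod}\ p)$; and since $c_{1}^{G'}=-c_{1}$ and $c_{3}^{G'}=-c_{3}$, the condition on the pair $(c_{1},c_{3})$ is preserved, as both congruence mod $2$ and the odd/even pattern are invariant under negation. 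Thus $G'$ lies within the hypotheses of Theorem \ref{th1}, and a fortiori of Theorem \ref{th2}.

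Having verified the hypotheses, I would apply Theorem \ref{th2} to $G'$ in the case $n\geq3$ odd, obtaining $k^{n}B_{n}^{G'}(h/k)\in\mathbb{Z}$, and hence $n\,k^{n}\zeta^{G}(1-n,h/k)\in\mathbb{Z}$. The restriction to odd $n\geq3$ is what makes the argument clean and explains the hypothesis: for such $n$ the von Staudt correction term of Theorem \ref{th2} is absent (no odd prime $p$ has $p-1\mid n$), whereas for even $n$ one would only control $k^{n}B_{n}^{G'}(h/k)$ modulo the generally non-integral sum $\sum_{p-1\mid n,\,p\nmid k}c_{p-1}^{n/(p-1)}/p$. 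I expect the only genuinely delicate point to be the sign bookkeeping of the second paragraph --- confirming the relation $c_{i}^{G'}=(-1)^{i}c_{i}$ and that the congruence $c_{p-1}\equiv1\ (\mathrm{mod}\ p)$ survives the reflection $G\mapsto-G(-\cdot)$; once these are in place, the result is a direct substitution into theorems already established.
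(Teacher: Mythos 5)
Your proposal is correct and takes essentially the same route as the paper: the paper's proof is precisely the one-line reduction via Proposition \ref{corollary} with $m=n-1$ followed by an appeal to the odd case of Theorem \ref{th2}. Your second paragraph, verifying that $G'(t)=-G(-t)$ inherits the hypotheses of Theorem \ref{th1} through the sign relation $c_{i}^{G'}=(-1)^{i}c_{i}$, is a detail the paper leaves implicit, and your verification of it is accurate.
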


\begin{proof}
We can use the previous Proposition \ref{corollary} with $m=n-1$ in the property \eqref{zeta}. The result is a consequence of the statement of Theorem \ref{th2}.
\end{proof}

The case  $n$ even or $n=1$ is very similar and is left to the reader.
\subsection{The $\chi$--universal numbers from Dirichlet characters and congruences}

Let us consider now the case of zeta functions, associated with formal group laws, depending on a Dirichlet character. In the following, given $N\in\mathbb{N}$, $N\neq 0$, we will denote by $\chi(n)$ a \textit{Dirichlet character with conductor $N$}.

%which satisfies the following assumptions:
%\begin{eqnarray}
%\noindent\nonumber &\text{a)}& periodicity:  \quad \chi(n+N)=\chi(n); \hspace{78mm}  \\
%\noindent\nonumber &\text{b)}& multiplicativity: \quad \chi(mn)=\chi(m)\chi(n), \quad  \text{if} \hspace{1.0mm} m \hspace{1.0mm} \text{and} \hspace{1.0mm} n \hspace{1.0mm} \text{are} \hspace{1.0mm} %\text{prime to} \hspace{1.0mm} N; \\
%\noindent\nonumber &\text{c)}& degeneracy: \quad \chi(n)=0, \quad  \text{if} \hspace{1.0mm} n \hspace{1.0mm} \text{and} \hspace{1.0mm} N \hspace{1mm} \text{have a common}\hspace{1.0mm} \text{divisor} %\hspace{1.0mm} d>1.
%\end{eqnarray}
%The integer $N$ is called the \textit{conductor} of $\chi$. The \textit{principal character} $\chi_{N}$ with conductor $N$ is defined to be
%\begin{equation*}
%\chi_{N}(n):= \begin{cases}  1 \qquad\qquad \text{if} \hspace{1.5mm} n \hspace{1.5mm}\text{is prime to} \hspace{1.5mm} N;\\
% 0 \qquad\qquad \text{otherwise}. \label{chif}
%\end{cases}
%\end{equation*}
We propose the following definition.

\begin{definition}
Let $\chi $ be a nontrivial Dirichlet character of conductor \thinspace $N$.
The Bernoulli $\chi $--numbers $B_{n,\chi }^{G}\,$ related with
the formal group law associated with the formal group exponential $G$ are defined by
\begin{equation}
B_{n,\chi }^{G}:=N^{n-1}\sum_{a=1}^{N}\chi \left( a\right) B_{n}^{G}\left(
\frac{a}{N}\right) . \label{chinumb}
\end{equation}
Similarly, the universal Bernoulli $\chi $--numbers are defined by
\begin{equation}
\widehat{B}_{n,\chi }:=N^{n-1}\sum_{a=1}^{N}\chi \left( a\right) \widehat{B}_{n}\left(
\frac{a}{N}\right) . \label{chinumb}
\end{equation}
\end{definition}
It is possible to define zeta functions related to Dirichlet characters and formal group laws via the formula
\begin{equation}
L\left( G,\chi ,s\right) =\frac{1}{N^{s}}\sum_{j=1}^{N }\chi \left( j\right)
\zeta ^{G}\left( s,\frac{j}{N}\right) \text{.} \label{L}
\end{equation}
The following result provides a congruence for the class of zeta functions (\ref{L}).
\begin{corollary}
\label{th4} Under the hypotheses of Theorem \ref{th3}, we have for $n$ odd, $n\geq3$
\begin{equation}
n N L\left( G,\chi, 1-n\right)\in \mathbb{Z (\chi})\text{,}  \label{RHc}
\end{equation}

where $\mathbb{Z (\chi})$ denotes the integral extension of $\mathbb{Z}$ generated by the values of the non-trivial character $\chi$.
\end{corollary}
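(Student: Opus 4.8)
The plan is to reduce the corollary directly to Theorem \ref{th3}, so that the statement becomes essentially a bookkeeping consequence of the congruence for $\zeta^{G}$ already established there. The substance lies entirely in Theorem \ref{th3}; the corollary only has to package its conclusion through the definition \eqref{L}.

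First I would specialize the defining formula \eqref{L} at $s=1-n$. Since $N^{-(1-n)}=N^{n-1}$, this gives
\[
L\left( G,\chi,1-n\right)=N^{n-1}\sum_{j=1}^{N}\chi\left( j\right)\zeta^{G}\left( 1-n,j/N\right).
\]
Multiplying both sides by $nN$ and absorbing the powers of $N$ into each summand, I obtain
\[
nN\,L\left( G,\chi,1-n\right)=\sum_{j=1}^{N}\chi\left( j\right)\left[\,nN^{n}\,\zeta^{G}\left( 1-n,j/N\right)\right].
\]

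Next I would apply Theorem \ref{th3} to each bracketed quantity with the choice $h=j$ and $k=N$. As $j$ ranges over $1,\ldots,N$ we always have $h=j>0$ and $k=N>0$, and $n$ is odd with $n\geq 3$; moreover the standing hypotheses of the present corollary are exactly those of Theorem \ref{th3}. Hence $nN^{n}\zeta^{G}\left( 1-n,j/N\right)\in\mathbb{Z}$ for every $j$.

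Finally I would conclude by observing that each summand is an integer multiplied by the character value $\chi(j)$, which belongs to $\mathbb{Z}(\chi)$ by the very definition of that ring as the integral extension of $\mathbb{Z}$ generated by the values of $\chi$. Since $\mathbb{Z}(\chi)$ is closed under addition, the finite sum $nN\,L(G,\chi,1-n)$ again lies in $\mathbb{Z}(\chi)$, which is the assertion. There is no genuine obstacle here: the only point requiring attention is that the index substitution $(h,k)=(j,N)$ meets the positivity requirements of Theorem \ref{th3}, which is immediate, and in particular no coprimality or residue condition on $j$ and $N$ intervenes because Theorem \ref{th3} holds for all positive $h,k$.
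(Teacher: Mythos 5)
Your proof is correct and follows essentially the same route as the paper, which simply invokes the argument of Theorem \ref{th3}: you expand $L(G,\chi,1-n)$ via the definition \eqref{L}, apply the integrality $nN^{n}\zeta^{G}(1-n,j/N)\in\mathbb{Z}$ of Theorem \ref{th3} with $(h,k)=(j,N)$ to each term, and conclude by closure of $\mathbb{Z}(\chi)$ under addition. Your version is in fact more explicit than the paper's one-line proof, and the checks you make (positivity of $h,k$, no coprimality condition needed) are exactly the right ones.
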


\begin{proof}
It is sufficient to invoke the same argument used in Theorem \ref{th3}.
\end{proof}

\section*{Appendix}
Here we report the explicit expressions of the first universal Bernoulli polynomials.
\beq
\widehat{B}_0(x)=1 \quad \widehat{B}_1(x)=x + c_1 /2, \quad \widehat{B}_2(x)=x^2 + x c_1 - c_{2}/2 + 2 c_2 /3, \nn
\eeq
\beq
\widehat{B}_3(x)=x^3 +
 3/2 \hspace{1mm} x^2 c_1 + \left(2 c_2- 3/2  \hspace{1mm} c_{1}^2\right) x + (3 c_{1}^3)/2  -
 3 c_1 c_2 + 3 c_3/2, \nn
\eeq
\beqa
\widehat{B}_4(x)&=&x^4 + 2 x^3 c_1 + \left(4 c_2 - 3 c_{1}^2\right)x^2
+ \left(6 c_{1}^3  - 12 c_1 c_2 + 6 c_3\right)x  \nn \\&+&  20 c_{1}^2 c_2 - 16/3  \hspace{1mm} c_{2}^2  - 12 c_1 c_3 +
 24/5  \hspace{1mm} c_4 - 15 c_{1}^4/2, \nn
\eeqa
\beqa
\widehat{B}_5(x)&=&x^5 + 5/2  \hspace{1mm} x^4 c_1 + \left(20/3  \hspace{1mm} c_{2}- 5 c_{1}^2\right) x^3 +
\left(15  c_{1}^3 - 30 c_{1} c_{2} + 15 c_3\right) x^2  \nn \\
\noindent\nn &+& \left(100  \hspace{1mm} c_{1}^2 c_{2} -
 75/2 \hspace{1mm}  c_{1}^4 - 80/3  \hspace{1mm} c_{2}^2 - 60 c_{1} c_{3} + 24 c_{4}\right) x +
 105/2  \hspace{1mm} c_{1}^5  - 175 c_{1}^3 c_{2}  \\
\noindent\nn &+& 100  \hspace{1mm} c_{1} c_{2}^2 + 225/2  \hspace{1mm} c_{1}^2 c_3 -
 50 c_{2} c_{3}  - 60 c_{1} c_{4} + 20 c_{5}.
\eeqa

\textbf{Acknowledgments}

I am indebted to the unknown referee for many useful observations, which improved the quality of the paper.

I wish to thank heartily Prof. D. Zagier for interest in the work and for correcting a wrong statement in an early version of the paper. I am grateful to Prof. A. Granville for suggesting the cited Problem. I am also indebted with Prof. R. A. Leo for many interesting discussions and helpful suggestions.

Part of this research work has been carried out in the Centro di ricerche matematiche Ennio De Giorgi, Scuola Normale Superiore, Pisa, that I thank for kind hospitality.

The support from the research project FIS2011--22566, Ministerio de Ciencia e Innovaci\'{o}n, Spain is gratefully acknowledged.
%\end{acknowledgments}

\end{document}